\numberwithin{equation}{section}
\numberwithin{figure}{section}
\theoremstyle{plain}
\newtheorem{thm}{\protect\theoremname}
  \theoremstyle{plain}
  \newtheorem{lem}[thm]{\protect\lemmaname}
  \theoremstyle{remark}
  \newtheorem{rem}[thm]{\protect\remarkname}
  \theoremstyle{plain}
  \newtheorem{prop}[thm]{\protect\propositionname}
\def\makebbb#1{
    \expandafter\gdef\csname#1\endcsname{
        \ensuremath{\Bbb{#1}}}
}\makebbb{R}\makebbb{N}\makebbb{Z}\makebbb{C}\makebbb{H}\makebbb{E}\makebbb{H}\makebbb{P}\makebbb{B}\makebbb{Q}\makebbb{E}
  \providecommand{\lemmaname}{Lemma}
  \providecommand{\propositionname}{Proposition}
  \providecommand{\remarkname}{Remark}
\providecommand{\theoremname}{Theorem}
\begin{document}

\title{On the strict convexity of the K-energy}

\author{Robert J. Berman}
\begin{abstract}
Let $(X,L)$ be a polarized projective complex manifold. We show,
by a simple toric one-dimensional example, that Mabuchi's K\textendash energy
functional on the geodesically complete space of bounded positive
$(1,1)-$forms in $c_{1}(L),$ endowed with the Mabuchi-Donaldson-Semmes
metric, is not strictly convex modulo automorphisms. However, under
some further assumptions the strict convexity in question does hold
in the toric case. This leads to a uniqueness result saying that a
finite energy minimizer of the K-energy (which exists on any toric
polarized manifold $(X,L)$ which is uniformly K-stable) is uniquely
determined modulo automorphisms under the assumption that there exists
some minimizer with strictly positive curvature current.
\end{abstract}

\email{robertb@chalmers.se}
\maketitle

\section{Introduction}

Let $(X,L)$ be a polarized compact complex manifold and denote by
$\mathcal{H}$ the space of all smooth metrics $\phi$ on the line
bundle $L$ with strictly positive curvature, i.e the curvature two-form
$\omega_{\phi}$ of $\phi$ defines a Kähler metric on $X.$ A leading
role in Kähler geometry is played by Mabuchi's K-energy functional
$\mathcal{M}$ on $\mathcal{H},$ which has the property that a metric
$\phi$ in $\mathcal{H}$ is a critical point for $\mathcal{M}$ if
and only if the Kähler metric $\omega_{\phi}$ has constant scalar
curvature \cite{mab0}. From the point of view of Geometric Invariant
Theory (GIT) the K-energy can, as shown by Donaldson \cite{do00},
be identified with the Kempf-Ness ``norm-functional'' for the natural
action of the group of Hamiltonian diffeomorphisms on the space of
all complex structures on $X,$ compatible with a given symplectic
form. 

As shown by Mabuchi the functional $\mathcal{M}$ is convex along
geodesics in the space $\mathcal{H}$ endowed with its canonical Riemannian
metric (the Mabuchi-Semmes-Donaldson metric). More precisely, as indicated
by the GIT interpretation, $\mathcal{M}$ is \emph{strictly} convex
modulo the action of the automorphism group $G$ of $(X,L)$ in the
following sense: let $\phi_{t}$ be a geodesic in $\mathcal{H}$ (parametrized
so that $t\in[0,1])$ then
\begin{equation}
t\mapsto\mathcal{M}(\phi_{t})\,\text{is affine}\,\text{iff\,\ensuremath{\phi}(t)=}g(t)\phi_{0},\label{eq:M affine iff intro}
\end{equation}
 where $g(t)\phi_{0}$ denotes the action on $\phi_{0}$ of a one-parameter
subgroup in $G,$ i.e. $\phi_{t}$ is equal to the pull-back of $\phi_{0}$
under the flow of a holomorphic vector field on $X.$ However, the
boundary value problem for the geodesic equation in $\mathcal{H}$
does not, in general, admit strong solutions \cite{l-v}. In order
to bypass this complication Chen introduced a natural extension of
$\mathcal{M}$ to the larger space $\mathcal{H}_{1,1}$ consisting
of all (singular) metrics $\phi$ on $L$ such that the curvature
$\omega_{\phi}$ is defined as an $L^{\infty}-$form. The advantage
of the latter space is that it is geodesically convex (in the sense
of metric spaces; see \cite{da} and references therein). It was conjectured
by Chen \cite{c1} and confirmed in \cite{b-b2} that $\mathcal{M}$
is convex on $\mathcal{H}_{1,1}.$ However, the question whether $\mathcal{M}$
is strictly convex modulo the action of $G$ on $\mathcal{H}_{1,1}$
was left open in \cite{b-b2}. In this short note we give a negative
answer to the question already in the simplest case when $X$ is the
Riemann sphere and the metrics are $S^{1}-$invariant.
\begin{thm}
\label{thm:counterex}Let $L\rightarrow X$ be the hyperplane line
bundle over the Riemann sphere. There exists an $S^{1}-$invariant
geodesic $\phi_{t}$ in $\mathcal{H}_{1,1}$ such that $\mathcal{M}(\phi_{t})$
is affine, but $\phi_{t}$ is not of the form $g(t)\phi_{0}.$ 
\end{thm}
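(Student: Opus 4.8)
The plan is to pass to the toric description of $S^{1}$-invariant metrics and reduce everything to convex analysis on the moment interval. Writing $x=\log|z|^{2}$ on $\C^{*}\subset\P^{1}$, an $S^{1}$-invariant metric $\phi$ corresponds to a convex function $g=g_{\phi}$ of $x\in\R$, and I would work with its Legendre transform $u=g^{*}$, the symplectic potential, which is a convex function on the moment polytope $P=[0,1]$ of $(\P^{1},L)$. The class $\mathcal{H}_{1,1}$ translates into the condition that $g''\in L^{\infty}$, equivalently that the absolutely continuous part of the measure $u''$ is bounded below by a positive constant, while $u$ is allowed to have corners (so $u''$ may carry a singular atomic part). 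I would first record the two facts that make the toric case tractable. First, the weak geodesic joining $\phi_{0}$ and $\phi_{1}$ is dual to the affine interpolation of symplectic potentials, $u_{t}=(1-t)u_{0}+tu_{1}$: the candidate $\Phi(x,t)=\sup_{p}\big(xp-u_{t}(p)\big)$ is a supremum of functions affine in $(x,t)$, hence a bounded convex (psh) function, and a direct Legendre computation gives $\partial_{t}^{2}u_{t}=-\det\mathrm{Hess}_{(x,t)}\Phi/\Phi_{xx}$, so $\partial_{t}^{2}u_{t}=0$ is exactly the homogeneous Monge--Amp\`ere equation (see \cite{da}). Second, by Donaldson's toric formula \cite{do00} the K-energy takes the form $\mathcal{M}(u)=-\int_{P}\log (u'')_{\mathrm{ac}}\,dp+\mathcal{L}(u)$, where $(u'')_{\mathrm{ac}}$ is the absolutely continuous density and $\mathcal{L}$ is a linear functional (a boundary integral minus a multiple of $\int_{P}u$); crucially, the entropy term sees only $(u'')_{\mathrm{ac}}$ and is blind to corners of $u$.

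These two facts reduce the problem to a one-line convexity computation. Along $u_{t}=(1-t)u_{0}+tu_{1}$ the term $\mathcal{L}(u_{t})$ is automatically affine, while $t\mapsto -\int_{P}\log\big((1-t)(u_{0}'')_{\mathrm{ac}}+t(u_{1}'')_{\mathrm{ac}}\big)\,dp$ is convex and is affine precisely when $(u_{0}'')_{\mathrm{ac}}=(u_{1}'')_{\mathrm{ac}}$ a.e. This pinpoints the slack: I can make $\mathcal{M}(\phi_{t})$ affine, without $u_{1}-u_{0}$ being affine, by letting $u_{0}$ and $u_{1}$ share the same absolutely continuous second derivative but differ by a purely singular convex term. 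Concretely I would take $u_{0}$ to be the Guillemin (Fubini--Study) potential $u_{0}(p)=p\log p+(1-p)\log(1-p)$, whose density $1/(p(1-p))$ is bounded below by $4$, and set $u_{1}=u_{0}+v$ with $v$ convex piecewise-linear with a single corner at some $p^{*}\in(0,1)$, say $v(p)=\max(a(p-p^{*}),0)$ with $a>0$. Then $u_{t}=u_{0}+tv$, so $(u_{t}'')_{\mathrm{ac}}=(u_{0}'')_{\mathrm{ac}}$ and $\mathcal{M}(\phi_{t})$ is affine.

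It remains to check that $\phi_{t}$ genuinely lies in $\mathcal{H}_{1,1}$ and is not of the form $g(t)\phi_{0}$. Since $(u_{t}'')_{\mathrm{ac}}\ge 4$ in the interior and $v$ is affine near $\partial P$ (so the Guillemin boundary behaviour of $u_{0}$ is undisturbed), the dual function $g_{t}=u_{t}^{*}$ satisfies $0\le g_{t}''\le 1/4$, i.e. $\omega_{\phi_{t}}\in L^{\infty}$ and $\phi_{t}\in\mathcal{H}_{1,1}$. The corner of $u_{t}$ at $p^{*}$ of height $ta$ dualizes to $g_{t}$ being affine on an interval of length $ta$; equivalently $\omega_{\phi_{t}}\equiv 0$ on a nonempty annulus $\{\,s_{-}\le\log|z|^{2}\le s_{+}\,\}$ for every $t\in(0,1]$. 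On the other hand $\phi_{0}$ is the smooth Fubini--Study metric, and any $g(t)\phi_{0}$ has curvature $F_{t}^{*}\omega_{\phi_{0}}$ for a biholomorphism $F_{t}$ of $\P^{1}$, hence a smooth strictly positive form that vanishes nowhere. Therefore $\phi_{t}\neq g(t)\phi_{0}$ for $t>0$, completing the construction.

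The steps requiring genuine care, rather than the elementary convexity estimate, are the two structural inputs quoted above: that the affine interpolation of symplectic potentials, including its singular corner part, really is the weak geodesic in $\mathcal{H}_{1,1}$ (this is where one uses the envelope characterization together with the fact that $\Phi=\sup_{p}(xp-u_{t})$ is a bounded psh solution of the homogeneous Monge--Amp\`ere equation), and that the extension of $\mathcal{M}$ to $\mathcal{H}_{1,1}$ of \cite{c1,b-b2} restricts to Donaldson's toric expression with the entropy term depending only on $(u'')_{\mathrm{ac}}$. The latter is the heart of the matter: it is exactly the insensitivity of the entropy to the degeneracy locus $\{\omega_{\phi}=0\}$ that allows an honest geodesic to move that locus while keeping $\mathcal{M}$ affine, something impossible along an automorphism orbit.
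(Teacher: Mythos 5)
Your route coincides with the paper's: pass to symplectic potentials on the moment interval, take $u_{t}=u_{0}+tv$ with $u_{0}$ the Fubini--Study (Guillemin) potential and $v$ convex piecewise affine with a single corner (the paper takes $p^{*}=1/2$, $a=1$), note that the entropy term in the toric formula for $\mathcal{M}$ sees only the absolutely continuous part of $u''$, and rule out an automorphism orbit because the curvature of $\phi_{t}$ vanishes on a nonempty collar while $g(t)\phi_{0}$ has everywhere strictly positive curvature. The construction and the non-automorphism argument are sound.

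The genuine gap is the step you yourself call \emph{the heart of the matter} and then do not carry out: the claim that Chen's extension of $\mathcal{M}$ to $\mathcal{H}_{1,1}$ is computed, for potentials $u$ with corners, by $\mathcal{M}=\mathcal{L}(u)-\int\log\,(u'')_{\mathrm{ac}}\,dy$. You cite this as Donaldson's toric formula, but Donaldson's formula is established for smooth, strictly convex symplectic potentials, i.e.\ for $\Phi\in\mathcal{H}$; your geodesic leaves $\mathcal{H}$ for every $t>0$ (that is the whole point of the example), and the assertion that the entropy is ``blind to corners'' is precisely what must be proved --- it is the novel content here, not a quotable fact. The paper devotes Lemma \ref{lem:D formula in one dim sing} to exactly this, in two steps: (i) rewrite the Chen--Tian formula as $\mathcal{M}(\Phi)=E_{0}(\Phi)+\int_{\R}\phi''\log\phi''\,dx$ and show that $E_{0}$ is continuous along approximations $\Phi_{j}\in\mathcal{H}^{S^{1}}$ with $\left\Vert \Phi_{j}-\Phi\right\Vert _{L^{\infty}}\rightarrow0$ and $\phi_{j}''\leq C\phi_{0}''$ (a tightness argument for the measures $\phi_{j}''\,dx$), so that Donaldson's smooth-case identity passes to the limit; (ii) invoke McCann's monotone change-of-variables theorem to convert $\int_{\R}\phi''\log\phi''\,dx$ into $-\int_{[0,1]}\log u''\,dy$, which is where the singular part of $\partial^{2}u$ drops out. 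Without (i) and (ii), or some substitute, your claim that $\mathcal{M}(\phi_{t})$ is affine is an assertion, not a proof.

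A secondary inaccuracy: membership in $\mathcal{H}_{1,1}$ is \emph{not} equivalent to $g''\in L^{\infty}(\R)$ in the logarithmic coordinate. Boundedness of $\omega_{\phi}$ as a form on $\P^{1}$ requires $g''(x)\leq C\phi_{0}''(x)$, which decays exponentially at the poles; dually one needs $u''\geq c/(y(1-y))$, not merely $u''\geq c$. Your example survives this because $v$ is affine near $\partial P$, so that $\phi_{t}$ coincides with the Fubini--Study potential near one pole and with a translate of it near the other (this is the content of the explicit description in Lemma \ref{lem:phi t}); that, and not the bound $g_{t}''\leq1/4$, is what puts $\phi_{t}$ in $\mathcal{H}_{1,1}$, and the step ``$g_{t}''\leq1/4$, hence $\omega_{\phi_{t}}\in L^{\infty}$'' should be replaced accordingly.
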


In terms of the standard holomorphic coordinate $z$ on the affine
piece $\C$ of $X$ the geodesic $\phi_{t}$ in the previous theorem
can be taken so that $\phi_{t}$ is equal to the Fubini-Study metric
$\phi_{0}$ on the lower-hemisphere, flat on a $t-$dependent collar
attached to the equator and then glued to a one-parameter curve of
the form $g(t)^{*}\phi_{0}$ in the remaining region of the upper
hemi-sphere. 

The failure of the strict convexity of $\mathcal{M}$ on $\mathcal{H}_{1,1}$
modulo $G$ appears to be quite surprising in view of the fact that
the other canonical functional in Kähler geometry - the Ding functional
$\mathcal{D}$ - is strictly convex on $\mathcal{H}_{1,1}$ modulo
$G.$ In fact, the Ding functional (which is only defined in the ``Fano
case'', i.e. when $L$ is the anti-canonical line bundle over a Fano
manifold) is even strictly convex modulo the action of $G$ on the
space of all $L^{\infty}-$metrics on $L$ with positive curvature
current \cite{bern} . 

One important motivation for studying strict convexity properties
of the Mabuchi functional \emph{$\mathcal{M}$ }on suitable completions
of $\mathcal{H}$ comes from the \emph{Yau-Tian-Donaldson conjecture.
}In its uniform version the conjecture says that the first Chern class
$c_{1}(L)$ of $L$ contains a Kähler metric of constant scalar curvature
iff $(X,L)$ is uniformly K-stable (in the $L^{1}-$sense) relative
to a maximal torus of $G$ \cite{bjh1,der}. The ``only if'' direction
was established in \cite{bdl2} when $G$ is trivial (and a similar
proof applies in the general case \cite{bbhj}). The proof in \cite{bdl2}
uses the convexity of $\mathcal{M}$ on the finite energy completion
$\mathcal{E}^{1}$ of $\mathcal{H}.$\footnote{The space $\mathcal{E}^{1}$ was originally introduced in \cite{g-z1}
from a pluripotential point of view and, as shown in \cite{da}, $\mathcal{E}^{1}$
may be identified with the metric completion of $\mathcal{H}$ with
respect to the $L^{1}-$Finsler version of the Mabuchi-Semmes-Donaldson
metric on $\mathcal{H}$} The remaining implication in the Yay-Tian-Donaldson conjecture is
still widely open, in general, but a first step would be to establish
the existence of a minimizer of \emph{$\mathcal{M}$} in the finite
energy space $\mathcal{E}^{1}$, by generalizing the variational approach
to the ``Fano case'' introduced in \cite{bbj}. Leaving aside the
challenging question of the regularity of a minimizer one can still
ask if the minimizer is canonical, i.e. uniquely determined modulo
$G?$ (as conjectured in \cite{d-r}). The uniqueness in question
would follow from the strict convexity of $\mathcal{M}$ on $\mathcal{E}^{1}$
modulo the action of $G.$ However, by the previous theorem such a
strict convexity does not hold, in general. On the other hand, it
would be enough to establish the following weaker strict convexity
property: 
\[
t\mapsto\mathcal{M}(\phi_{t})\,\text{is constant}\,\implies\,\ensuremath{\phi}(t)=g(t)\phi_{0},
\]
(the converse implication holds if $(X,L)$ is K-stable). This may
still be too optimistic, but here we observe that this approach towards
the uniqueness problem can be made to work in the toric case if one
assumes some a priori positivity of the curvature current of some
minimizer.
\begin{thm}
\label{thm:toric}Let $(X,L)$ be an $n-$dimensional polarized toric
manifold. Assume that $(X,L)$ is uniformly K-stable relative the
torus action. Then there exists a finite energy minimizer $\phi$
of $\mathcal{M}$ and the minimizer is unique modulo the action of
$\C^{*n}$ under the assumption that there exists some finite energy
minimizer $\phi_{0}$ whose curvature current is strictly positive
on compacts of the dense open orbit of $\C^{*n}$ in $X.$ 
\end{thm}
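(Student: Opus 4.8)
The plan is to reduce everything to convex analysis on the moment polytope $P\subset\R^{n}$ of $(X,L)$. First I would recall the toric dictionary: torus-invariant finite-energy metrics $\phi$ on $L$ correspond to finite-energy convex symplectic potentials $u$ on $P$, the curvature $\omega_{\phi}$ being encoded by the Hessian $D^{2}u$, which is positive definite on $\mathrm{int}(P)$ precisely when $\omega_{\phi}$ is strictly positive on compacts of the dense open orbit. Since $\mathcal{M}$ is invariant under the compact torus $(S^{1})^{n}$ and convex on $\mathcal{E}^{1}$ by \cite{b-b2}, averaging over $(S^{1})^{n}$ and Jensen's inequality show that it suffices to work with torus-invariant competitors, so one may pass to the convex picture. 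There Donaldson's formula reads $\mathcal{M}(u)=\mathcal{L}(u)-\int_{P}\log\det(D^{2}u)\,dx+\mathrm{const}$, where $\mathcal{L}$ is an explicit linear functional and $u\mapsto-\int_{P}\log\det(D^{2}u)\,dx$ is convex; moreover the Mabuchi geodesic joining two torus-invariant metrics corresponds to the affine segment $u_{t}=(1-t)u_{0}+tu_{1}$ in the space of symplectic potentials.

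For existence, I would translate uniform K-stability relative to the torus into Donaldson's coercivity estimate $\mathcal{L}(u)\geq\delta\|u\|$ for $u$ normalized modulo affine functions; this is the toric manifestation of the stability--coercivity correspondence, and it yields that $\mathcal{M}$ is coercive (proper) on $\mathcal{E}^{1}$ modulo $\C^{*n}$. The direct method --- taking a minimizing sequence, using compactness of normalized convex functions of bounded energy together with the lower semicontinuity of $\mathcal{M}$ --- then produces a finite-energy minimizer $\phi$.

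For uniqueness, let $\phi_{0}$ be the assumed minimizer with $A_{0}:=D^{2}u_{0}>0$ on $\mathrm{int}(P)$, and let $\phi_{1}$ be any other finite-energy (invariant) minimizer with potential $u_{1}$, $A_{1}:=D^{2}u_{1}$. Join them by the geodesic $u_{t}=(1-t)u_{0}+tu_{1}$. Since $\mathcal{M}$ is convex along geodesics and attains its minimum at both endpoints, $t\mapsto\mathcal{M}(u_{t})$ is constant; as $\mathcal{L}(u_{t})$ is affine in $t$, the entropy $E(t):=-\int_{P}\log\det(D^{2}u_{t})\,dx$ must be affine, hence $E''(t)\equiv0$. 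Differentiating twice at $t=0$, where $A_{0}$ is positive definite by hypothesis, gives $E''(0)=\int_{P}\mathrm{tr}\big((A_{0}^{-1}(A_{1}-A_{0}))^{2}\big)\,dx=0$; since the integrand is pointwise nonnegative (by strict convexity of $-\log\det$) and vanishes only when $A_{1}=A_{0}$, I conclude $D^{2}u_{1}=D^{2}u_{0}$ a.e.\ on $P$. Thus $u_{1}-u_{0}$ is affine, and since adding an affine function to the symplectic potential corresponds exactly to translation in the logarithmic coordinates, i.e.\ to the action of $\C^{*n}$, we get $\phi_{1}=g\,\phi_{0}$ for some $g\in\C^{*n}$; running the same computation along the whole segment gives $\phi_{t}=g(t)\phi_{0}$.

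The main obstacle is rigor in the low-regularity setting: finite-energy minimizers need not be smooth, so $D^{2}u$ is only a measure and both the entropy and its second variation must be interpreted through the absolutely continuous part of the real Monge--Amp\`ere measure. The positivity hypothesis on $\phi_{0}$ is exactly what rescues the argument --- it guarantees that $A_{0}^{-1}$ is well defined on $\mathrm{int}(P)$, so that the strict convexity of $-\log\det$ can be invoked at $t=0$ --- but justifying that $E(t)$ is genuinely twice differentiable with the stated second variation, and controlling the boundary contributions near $\partial P$, is the delicate point, which I expect to handle by smooth approximation together with the convexity and semicontinuity results of \cite{b-b2}.
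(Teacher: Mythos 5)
Your existence argument and the first half of your uniqueness argument match the paper's: coercivity from uniform relative K-stability gives a minimizer, and along the geodesic $u_{t}=(1-t)u_{0}+tu_{1}$ the linear part $\mathcal{L}(u_{t})$ is affine, so constancy of $\mathcal{M}(\Phi_{t})$ forces the entropy term to be affine, which yields $\nabla^{2}u_{1}=\nabla^{2}u_{0}$ a.e. (the paper gets this from the pointwise strict concavity of $\log\det$, i.e.\ the arithmetic-geometric means inequality, applied to $t\mapsto\log\det((1-t)\nabla^{2}u_{0}+t\nabla^{2}u_{1})$ for a.e.\ fixed $y$; this sidesteps the differentiation of $E(t)$ under the integral that you yourself flag as delicate). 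The fatal step is the next sentence of your proposal: ``thus $u_{1}-u_{0}$ is affine.'' For finite-energy potentials the Alexandrov Hessian records only the \emph{absolutely continuous} part of the distributional Hessian, and two convex functions can have equal Alexandrov Hessians a.e.\ while differing by a non-affine function whose Hessian is purely singular. This is precisely the mechanism of Theorem \ref{thm:counterex} of this very paper: there $u_{t}=u_{0}+tv$ with $v$ convex and piecewise affine, so $\nabla^{2}u_{t}=\nabla^{2}u_{0}$ a.e., yet $v$ is not affine and $\Phi_{t}$ is not of the form $g(t)\Phi_{0}$. Your inference, taken at face value, would prove strict convexity of $\mathcal{M}$ modulo automorphisms on $\mathcal{H}_{1,1}$, contradicting that theorem; a telltale sign is that your uniqueness argument never invokes K-stability at all.

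The paper closes this gap in two moves, and this is where both hypotheses of the theorem genuinely enter. First, the positivity assumption on $\phi_{0}$: under the Legendre transform a lower bound $\nabla^{2}\phi_{0}\geq C_{K}I$ on compacts translates into an \emph{upper} bound $\nabla^{2}u_{0}\leq C_{K}^{-1}I$ locally, i.e.\ $u_{0}\in C_{loc}^{1,1}(P)$ --- not into your lower bound $D^{2}u_{0}>0$ (which holds a.e.\ for \emph{every} minimizer anyway, by Lemma \ref{lem:sing dona in toric}, since the entropy term is finite). With $u_{0}\in C_{loc}^{1,1}(P)$ its distributional Hessian has no singular part, and McCann's result (\cite[Lemma 3.2]{mc}, quoted as the final lemma of the paper) upgrades the a.e.\ equality \ref{eq:Hessian same ae} to the statement that $v:=u_{1}-u_{0}$ is \emph{convex} --- not affine. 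Second, constancy of $\mathcal{M}(\Phi_{t})$ together with the now-constant entropy term gives $\mathcal{L}(v)=0$, and only at this point is uniform relative K-stability \ref{eq:unif K toric} applied: since $\delta>0$ and $v$ is convex, $\mathcal{L}(v)=0$ forces $v$ to be affine, which is exactly the statement that $\Phi_{1}=g\Phi_{0}$ for some $g\in\C^{*n}$. So stability is used twice --- once for existence and once for uniqueness --- and the curvature hypothesis is what excludes the singular-Hessian counterexamples that your argument overlooks.
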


\subsection{Relations to previous results}

In view of its simplicity it is somewhat surprising that the counterexample
in Theorem \ref{thm:counterex} does not seem to have been noticed
before. The key point of the proof is a generalization of Donaldson
formula for the Mabuchi functional $\mathcal{M}$ in the smooth toric
setting to a singular setting (see Lemma \ref{lem:D formula in one dim sing}
and Lemma \ref{lem:sing dona in toric}), showing that 
\begin{equation}
\mathcal{M}(\phi)=\mathcal{F}(u),\label{eq:M is F intro}
\end{equation}
where the non-linear part of the functional $\mathcal{F}$ only depends
on the non-singular part (in the sense of Alexandrov) of the Hessian
of the convex function on the moment polytope of $(X,L),$ corresponding
to the metric $\phi.$ This leads, in fact, to a whole class of counter-examples
to the strict convexity in question, by taking $\phi_{t}$ to be any
torus invariant geodesic $\phi_{t},$ emanating from a given $\phi_{0}\in\mathcal{H},$
whose Legendre transform $u_{t}$ is of the form 
\[
u_{t}=u_{0}+tv,
\]
for a convex and piece-wise affine function $v.$ Such a curve defines
a geodesic ray associated to a toric test configuration for $(X,L)$
and the fact that $\phi_{t}\in\mathcal{H}_{1,1}$ then follows from
general results (see \cite[Section 7]{c-t,p-s,s-z}). A by-product
of formula \ref{eq:M is F intro} is a slope formula for the K-energy
along toric geodesics of finite energy (see Section \ref{lem:sing dona in toric}).

The functional $\mathcal{F}$ has previously appeared in a series
of papers by Zhou-Zhu (see Remark \ref{rem:zz}). In particular, it
was shown in \cite[Section 6]{z} (when $n=2)$ that a minimizer $u$
of $\mathcal{F}$ is uniquely determined, modulo the complexified
torus action, under the stronger assumption that there exists some
minimizer $u_{0}$ of $\mathcal{F}$ which is $C^{\infty}-$smooth
and strictly convex in the interior (while our assumption is equivalent
to merely assuming that $u$ is $C^{1,1}-$smooth in the interior).
We also recall that in the toric surface case (i.e when $n=2)$ it
was shown by Donaldson \cite{d0} that uniform K-stability is equivalent
to K-stability. Moreover, the Yau-Tian-Donaldson in the latter case
was settled by Donaldson in a series of papers culminating in \cite{do}
(as a consequence, any minimizer of $\mathcal{M}$ is then smooth
and positively curved). Theorem \ref{thm:toric} should also be compared
with the general uniqueness result for finite energy minimizers of
$\mathcal{M}$ on any Kähler manifold which holds under the assumption
that there exists some minimizer which is smooth and strictly positively
curved. The proof of the latter result, which generalizes the uniqueness
result in \cite{b-b2}, exploits a weaker form of strict convexity
which holds on the linearized level around a bona fide metric with
constant scalar curvature.

\subsubsection*{Acknowledgment}

This work was supported by grants from the ERC and the KAW foundation.
I am grateful to Long Li for comments on the first version of the
paper.

\section{Proofs}

We start with some preparations. To keep things as simple as possible
we mainly stick to the one-dimensional situation (see \cite{b-b}
for the general convex analytical setup and its relations to polarized
toric varieties).

\subsection{Convex preparations}

Let $\phi(x)$ be a lower semi-continuous (lsc) convex function on
$\R$ (taking values in $]-\infty,\infty]).$ Its point-wise derivative
$\phi'(x)$ exists a.e. on $\R$ and defines an element in $L_{loc}^{\infty}(\R).$
We will denote by $\partial\phi$ the subgradient of $\phi,$ which
is a set-valued map on $\R$ with the property that $(\partial\phi)(x)$
is a singleton iff $\phi'(x)$ exists at $x.$ Similarly, we will
denote by $\partial^{2}\phi$ the measure on $\R$ defined by the
second order distributional derivative of $\phi.$ By Lebesgue's theorem
we can decompose 
\[
\partial^{2}\phi=\partial_{s}^{2}\phi+\phi'',
\]
 where $\partial_{s}^{2}\phi$ denotes the singular part of the measure
$\partial^{2}\phi$ and $\phi''\in L_{loc}^{1}(\R)$ denotes the regular
part (wrt Lebesgue measure $dx$), which coincides with the second
order derivative of $\phi$ almost everywhere on $x.$ We set 
\begin{equation}
\phi_{0}(x):=\log(1+e^{x})\label{eq:def of phi not}
\end{equation}
 and 
\[
\mathcal{P}_{+}(\R):=\left\{ \phi:\,\,\text{\ensuremath{\phi}\,\ensuremath{\text{convex\,on\,\ensuremath{\R,\,\,\phi-\phi_{0}\in L^{\infty}(\R)}}}}\right\} 
\]
( $\partial^{2}\phi$ is a probability measure for any such $\phi$).
Given a function $\phi$ in $\mathcal{P}_{+}(\R)$ we will denote
by $u$ its \emph{Legendre transform} which defines a finite lsc convex
function $u$ on $[0,1]$ (which is equal to $\infty$ on $]0,1[^{c})$
\[
u(y):=(\phi^{*})(y):=\sup_{x\in\R}xy-\phi(x)
\]
Since $\phi=u^{*}$ the map $\phi\mapsto u$ gives a bijection
\[
\mathcal{P}_{+}(\R)\longleftrightarrow\left\{ u:\,\,\text{\ensuremath{u}\,\ensuremath{\text{convex\,on\,\ensuremath{[0,1]}}}}\right\} \cap L^{\infty}[0,1]
\]
 which is an isometry wrt the $L^{\infty}-$norms. Moreover, $\phi$
is smooth and strictly convex on $\R$ iff $u$ is smooth and strictly
convex on $]0,1[,$ as follows from the formula 
\begin{equation}
\phi(x)=xy-u'(y),\label{eq:legendre form}
\end{equation}
 if $x=u'(y)$ and $u$ is differentiable at $y$ (and vice versa
if $\phi$ is replaced by $u).$ Moreover, if $u$ is two times differentiable
at $y=0$ and $u''(y)>0$ then $\phi''$ is differentiable at $x$
and
\begin{equation}
\phi''(x)=1/u''(y)\label{eq:second deriv as inverse}
\end{equation}

\subsection{Complex preparations}

Let $(X,L):=(\P^{1},\mathcal{O}(1))$ be the complex projective line
$\P^{1}$ endowed with the hyperplane line bundle $\mathcal{O}(1).$
Realizing $\P^{1}$ as the Riemann sphere (i.e. the one-point compactification
of the complex line $\C)$ a locally bounded metric $\Phi$ on $\mathcal{O}(1)$
may, in the standard way, be identified with a convex function $\Phi(z)$
on $\C$ such that 
\[
\Phi-\Phi_{0}\in L^{\infty}(\C),\,\,\,\,\Phi_{0}(z):=\log(1+|z|^{2}),
\]
 where $\Phi_{0}$ corresponds to the Fubini-Study metric on $\mathcal{O}(1),$
which defines a smooth metric with strictly positively curvature $\omega_{0}$
on $\P^{1}$(coinciding with the standard $SU(2)-$ invariant two-form
on $\P^{1}).$ Moreover, the metric $\Phi$ on $\mathcal{O}(1)$ has
semi-positive curvature $\omega_{\Phi}$ on $\P^{1}$ iff $\Phi(z)$
is subharmonic on $\C.$ More precisely, 
\begin{equation}
\omega_{\Phi|\C}=\frac{i}{2\pi}\partial\bar{\partial}\Phi:=\frac{i}{2\pi}\frac{\partial^{2}\Phi}{\partial z\partial\bar{z}}dz\wedge\bar{dz}\label{eq:expr for curvature}
\end{equation}
We will denote by $\mathcal{H}_{b}^{S^{1}}$ the space of all bounded
(i.e $L^{\infty})$ metrics $\Phi$ on $\mathcal{O}(1)\rightarrow\P^{1}$
with semi-positive curvature which are $S^{1}-$invariant (wrt the
standard action of $S^{1}).$ Setting 
\[
x:=\log|z|^{2}
\]
gives a correspondence 
\[
\mathcal{H}_{b}^{S^{1}}\longleftrightarrow\mathcal{P}_{+}(\R),\,\,\,\Phi(z)\mapsto\phi(x):=\Phi(e^{2x})
\]
As is well-known, under the Legendre transform the Mabuchi-Donaldson-Semmes
metric on $\mathcal{H}^{T}$ corresponds the standard flat metric
induced from $L^{2}[0,1].$ It follows that a geodesic $\Phi_{t}$
in $\mathcal{H}_{b}^{S^{1}}$ corresponds to a curve $\phi_{t}$ in
$\mathcal{P}_{+}(\R)$ with the property that the corresponding curve
\[
u_{t}:=\phi_{t}^{*}
\]
of bounded convex functions on $[0,1]$ is affine wrt $t.$

In particular, taking $\phi_{0}$ to be defined by \ref{eq:def of phi not},
the following curve defines a geodesic in $\mathcal{H}_{b}^{S^{1}}:$ 

\[
\phi_{t}(x):=(u_{0}+tv)^{*},\,\,\,\,u_{0}(y):=\phi_{0}^{*}(y)=y\log y+(1-y)\log(1-y),
\]
 where $v$ is the following convex piece-wise affine function on
$[0,1]:$

\[
v(y):=0,\,\,\,y\in[0,\frac{1}{2}],\,\,\,v(y)=-\frac{1}{2}+y,\,\,y\in[\frac{1}{2},1]
\]
\begin{lem}
\label{lem:phi t}Assume that $t\in]0,1].$ Then $\phi_{t}$ defines
an $S^{1}-$invariant metric on $\mathcal{O}(1)\rightarrow\P^{1}$
which is $C^{1,1}-$smooth on $\P^{1}$ with semi-positive curvature
and $C^{\infty}-$ smooth and strictly positively curved on the complement
of a $t-$dependent neighborhood of the equator. More precisely, in
the logarithmic coordinate $x\in\R$ we have $\phi_{t}(x)=\phi_{0}(x)$
when $x\leq0$ and $\phi_{t}(x)=\phi_{0}(0)+x/2$ when $x\in[0,t]$
and $\phi_{t}(x)=\phi_{0}(x-t)+t/2$ when $x\geq t.$
\end{lem}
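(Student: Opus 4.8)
The plan is to compute the Legendre transform $\phi_t=(u_0+tv)^*$ explicitly, region by region, and then read the asserted regularity off the resulting formula. First I would record the subgradient of $w:=u_0+tv$. Since $u_0'(y)=\log\frac{y}{1-y}$ and $v$ is piecewise affine with $v'=0$ on $(0,\frac12)$ and $v'=1$ on $(\frac12,1)$, the map $y\mapsto w'(y)$ is a strictly increasing homeomorphism of $(0,\frac12)$ onto $(-\infty,0)$ and of $(\frac12,1)$ onto $(t,+\infty)$, while the kink of $v$ at $y=\frac12$ produces $\partial w(\frac12)=u_0'(\frac12)+t[0,1]=[0,t]$. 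This is the crucial point: under Legendre duality the single kink of $v$ opens up into the flat segment $x\in[0,t]$ on which $\phi_t$ will turn out to be affine.

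Next I would evaluate $\phi_t(x)=xy-w(y)$ on each of the three resulting ranges of $x$, using the defining relation $x\in\partial w(y)$. Everything rests on the elementary identity that for $y=e^{s}/(1+e^{s})$ one has $sy-u_0(y)=\log(1+e^{s})=\phi_0(s)$. For $x\le 0$ one takes $s=x$ and $v(y)=0$, giving $\phi_t(x)=\phi_0(x)$; for $x\ge t$ one takes $s=x-t$ and $v(y)=y-\frac12$, and after collecting the linear terms one gets $\phi_t(x)=\phi_0(x-t)+t/2$; for $x\in[0,t]$ the supremum is attained at the kink $y=\frac12$, where $u_0(\frac12)=-\log 2$ and $v(\frac12)=0$, yielding $\phi_t(x)=x/2+\log 2=\phi_0(0)+x/2$. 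This reproduces the three-piece formula in the statement.

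From the explicit formula the regularity is immediate. I would check continuity and $C^1$-matching at the junctions $x=0$ and $x=t$: the two expressions agree there, and since $\phi_0'(0)=\frac12$ the one-sided first derivatives both equal $\frac12$ at each junction, so $\phi_t\in C^1(\R)$. The second derivative equals $\phi_0''(x)=e^{x}/(1+e^{x})^2$ for $x<0$ and $\phi_0''(x-t)$ for $x>t$, and vanishes on $(0,t)$; it is therefore bounded on $\R$ but jumps (between $\frac14$ and $0$) at each junction, so $\phi_t$ is exactly $C^{1,1}$ and not $C^2$, with $\phi_t''\ge 0$ and $\phi_t''>0$ precisely on $\R\setminus[0,t]$.

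Finally I would translate back to the complex side. That $\phi_t$ defines an $S^1$-invariant bounded metric amounts to checking $\phi_t\in\mathcal{P}_{+}(\R)$ under the correspondence $\mathcal{H}_{b}^{S^{1}}\leftrightarrow\mathcal{P}_{+}(\R)$: convexity is automatic (a Legendre transform), and $\phi_t-\phi_0\in L^\infty(\R)$ is clear from the three-piece formula (the difference is $0$ for $x\le0$ and tends to $-t/2$ as $x\to+\infty$). Combined with \ref{eq:expr for curvature} and the identity $\omega_{\phi_t|\C}=\frac{i}{2\pi}\frac{\phi_t''(x)}{|z|^2}\,dz\wedge d\bar{z}$, convexity gives semi-positive curvature, strictly positive exactly where $\phi_t''>0$, i.e. off the collar $\{0\le\log|z|^2\le t\}$ around the equator $|z|=1$; smoothness near the two poles $x=\pm\infty$ follows because there $\phi_t$ is literally a (translated) Fubini--Study potential, which extends smoothly across $z=0$ and $z=\infty$. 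I expect the only genuinely delicate points to be tracking the Legendre transform across the kink of $v$ at $y=\frac12$ and verifying that the one-sided first derivatives truly agree at $x=0,t$ (so that one obtains $C^{1,1}$ rather than a corner); the remaining steps are routine substitutions.
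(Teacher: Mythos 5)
Your proof is correct, and its core computation --- evaluating $(u_{0}+tv)^{*}$ region by region via the subgradient picture, with the kink of $v$ at $y=\tfrac{1}{2}$ opening into the flat segment $[0,t]$ --- coincides with Step 2 of the paper's proof, which obtains the same three pieces from the two diffeomorphisms induced by $u_{t}'$ on $]0,\tfrac{1}{2}[$ and $]\tfrac{1}{2},1[$ together with monotonicity of $\phi_{t}'$. Where you genuinely diverge is in how the $C^{1,1}$ regularity is established. You read it off the explicit formula, by matching the one-sided first derivatives (both equal to $\tfrac{1}{2}$) at the junctions $x=0$ and $x=t$ and observing that $\phi_{t}''$ is bounded; this is elementary and fully rigorous, but it is tied to this particular $v$. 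The paper's Step 1 instead proves $C^{1,1}$ by an approximation argument that uses only that $v$ is convex and bounded: approximate $v$ uniformly by smooth strictly convex $v^{(j)}$, note that $(u_{0}+tv^{(j)})''\geq u_{0}''\geq1/C>0$, deduce from the Legendre duality formula \ref{eq:second deriv as inverse} that the transforms satisfy $(\phi_{t}^{(j)})''\leq C$, and pass this uniform bound to the limit to get $\partial^{2}\phi_{t}=\phi_{t}''\leq C$. That extra generality is precisely what underlies the paper's later remark that one obtains a whole class of counterexamples by taking \emph{any} convex piecewise affine $v$: the uniform Hessian bound, and hence membership in $\mathcal{H}_{1,1}$, does not require an explicit formula for $\phi_{t}$. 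In short, your route buys a self-contained and concrete treatment of this specific geodesic, while the paper's two-step structure buys the regularity statement for the general construction; either one suffices for Lemma \ref{lem:phi t} as stated.
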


\begin{proof}
\emph{Step 1:} $\phi_{t}$ is in $C^{1,1}(\R).$ 

This step only uses that $v$ is convex and bounded on $[0,1].$ Let
$v^{(j)}$ be a sequence of smooth strictly convex function on $[0,1]$
such that $\left\Vert v^{(j)}-v\right\Vert _{L^{\infty}[0,1]}\rightarrow0.$
Set $u^{(j)}(t):=u_{0}+tv^{(j)}$ and $\phi^{(j)}:=u^{(j)}{}^{*}.$
Since the Legendre transform preserves the $L^{\infty}-$norm we have
$\left\Vert \phi^{(j)}-\phi\right\Vert _{L^{\infty}(\R)}\rightarrow0.$
By construction, $u_{t}^{(j)}$ is smooth and satisfies $(u_{t}^{(j)})^{''}\geq(u_{0})^{''}\geq1/C>0.$
Hence, by \ref{eq:second deriv as inverse}, $(\phi_{t}^{(j)})^{''}\leq C$
and letting $j\rightarrow\infty$ thus implies that $\partial^{2}\phi=\phi''\leq C,$
showing that $\phi_{t}$ is in $C^{1,1}(\R).$ 

\emph{Step 2: }Explicit description of $\phi_{t}$

We fix $t>0$ and observe that the map $y\mapsto u_{t}'(y)$ induces
two diffeomorphisms (with inverses $x\mapsto\phi_{t}'(x))$
\begin{equation}
u_{t}':\,\,\,Y_{-}:=]0,\frac{1}{2}[\rightarrow X_{-}:=]-\infty,0[,\,\,\,Y_{+}:=]\frac{1}{2},1[\rightarrow X_{-}:=]t,\infty[\label{eq:diffeom}
\end{equation}
This follows directly from the fact that $u_{t}^{'}$ is strictly
positive on $Y_{\pm}$ and converges to $0$ and $t$ when $y\rightarrow1/2$
from the left and right, respectively. We claim that this implies,
by general principles, that the restriction of $\phi_{t}$ to $X_{\pm}$
only depends on the restriction of $u_{t}$ to $Y_{\pm}.$ Indeed,
if $x=u_{t}'(y)$ for $y\in Y_{\pm}$ then formula \ref{eq:legendre form}
shows that $\phi_{t|X_{\pm}}$ only depends on the restriction of
$u$ to $X_{-}.$ Hence, the restriction of $\phi_{t}$ to $X_{-}$
is given by $u_{0}^{*}=\phi_{0}$ and the restriction of $\phi_{t}$
to $X_{+}$ is given by the Legendre transform of $u_{0}(y)+t(y-1/2),$
which is equal to $\phi_{0}(x-t)+t/2.$ Finally, it follows from the
diffeomorphism \ref{eq:diffeom} (using, for example, that $\phi_{t}'$
is increasing) that $\phi_{t}'=1/2$ on $[0,t].$ 
\end{proof}
\begin{rem}
A more symmetric form of the geodesic $\phi_{t}$ may be obtained
by setting $\tilde{\phi}_{t}(x):=2\phi_{t}(x)-x,$ which has the property
that $\tilde{\phi}_{t}(x):=\tilde{\phi}_{0}(x):=\log(e^{-x}+e^{x})$
when $x\leq0$ and $\tilde{\phi}_{t}(x)=\tilde{\phi}_{0}(0)$ when
$x\in[0,t]$ and $\tilde{\phi}_{t}(x)=\tilde{\phi}_{0}(x-t)$ when
$x\geq t.$ Geometrically, $\tilde{\phi}_{t}$ defines a geodesic
ray of metrics on $\mathcal{O}(2)\rightarrow\P^{1},$ expressed in
terms of the trivialization of $\mathcal{O}(2)$ over $\C^{*}\subset\P^{1}$
induced from the embedding $\C^{*}\rightarrow\C^{2}\rightarrow\P^{2}$
defined by $F(z):=(z^{-1},z)\in\C^{2},$ where $X$ is identified
with the closure $\overline{F(\C)}$ of $F(\C)$ in $\P^{2}$ and
$\mathcal{O}(2)$ with the restriction of $\mathcal{O}_{\P^{2}}(1)$
to $\overline{F(\C)}.$ A direct calculation reveals that $\tilde{\phi}_{t}(x)$
(and hence also $\phi_{t}(x))$ is in fact $C^{1,1}-$smooth when
viewed as a function on $\R\times\R.$ This implies that the Laplacian
of the corresponding local potentials over $\P^{1}\times D^{*}$ (where
$D^{*}$ denotes the punctured unit disc with holomorphic coordinate
$\tau$ such that $t:=-\log|\tau|^{2}$) is locally bounded, i.e.
the geodesic has Chen's regularity \cite{c0} in the space-time variables.
It should also be pointed out that $\tilde{\phi}_{t}$ can be realized
as the geodesic ray, emanating from the Fubini-Study metric, associated
to the toric test configuration of $(X,L):=(\P^{1},\mathcal{O}(2))$
determined (in the sense of \cite{d0,c-t,s-z}) by the piece-wise
affine function $\tilde{v}(y)=\max\{0,y\}$ on the moment polytope
$[-1,1]$ of $(\P^{1},\mathcal{O}(2))$ (as in \cite{c-t}). Using
this realization the $C^{1,1}-$ regularity also follows from the
general results in \cite{p-s,c-t} which show that the Laplacian (or
equivalently complex Hessian) of the corresponding potential is locally
bounded over $X\times D^{*}.$ Indeed, in the toric setting boundedness
of the complex Hessian is equivalent to boundedness of the real Hessian,
i.e. to $C^{1,1}-$regularity.
\end{rem}

\subsubsection{The K-energy}

Let $(X,L)$ be a polarized compact complex manifold. We recall that
the K-energy functional was originally defined by Mabuchi \cite{mab0}
on the space $\mathcal{H}$ of all smooth metrics $\Phi$ on $L$
with strictly positive curvature by specifying its differential (more
precisely, this determines $\mathcal{M}$ up to an additive constant).
Chen extended $\mathcal{M}$ to the space $\mathcal{H}_{1,1}$ consisting
of all (singular) metrics $\phi$ on $L$ such that the curvature
$\omega_{\Phi}$ of $\Phi$ is defined as an $L^{\infty}-$form \cite{c1}.
The extension is based on the Chen-Tian formula for $\mathcal{M}$
on $\mathcal{H}$ which may be expressed as follows in terms of a
fixed Kähler form $\omega_{0}$ on $X:$ 
\begin{equation}
\mathcal{M}(u)=\left(\frac{\bar{R}}{n+1}\mathcal{E}(\Phi)-\mathcal{E}^{\mbox{Ric}\ensuremath{\omega_{0}}}(\Phi)\right)+H_{\omega_{0}^{n}}(\omega_{\Phi}^{n}),\,\,\,\,\,\bar{R}:=\frac{nc_{1}(X)\cdot[\omega_{0}]^{n-1}}{[\omega_{0}]^{n}},\label{eq:formula for mab}
\end{equation}
where
\begin{equation}
H_{\mu_{0}}(\mu):=\int_{X}\log\left(\frac{\mu}{\mu_{0}}\right)\mu\label{eq:def of entr in smooth case}
\end{equation}
and$\mathcal{E}$ and $\mathcal{E}^{\mbox{Ric}\ensuremath{\omega_{0}}}$
are defined, up to an additive constant, by their differentials on
$\mathcal{H}:$ 
\begin{equation}
d\mathcal{E}_{|\Phi}=(n+1)\omega_{\Phi}^{n},\,\,\,d\mathcal{E}_{|\Phi}^{\mbox{Ric}\ensuremath{\omega_{0}}}=n\omega_{\Phi}^{n-1}\wedge\mbox{Ric}\ensuremath{\omega_{0}}\label{eq:differentials of energy and T-en}
\end{equation}
with $\mbox{Ric}\ensuremath{\omega_{0}}$ denoting the two-form defined
by the Ricci curvature of $\omega_{0}$ (see \cite{b-b2} for a simple
direct proof of the Chen-Tian formula). The extension of $\mathcal{M}$
to $\mathcal{H}_{1,1}$ is obtained by observing that both terms appearing
in the rhs of formula \ref{eq:formula for mab} are well-defined (and
finite) when $\Phi\in\mathcal{H}_{1,1}.$ We note that the functional
appearing in the first bracket of the formula is continuous wrt the
$L^{\infty}-$ norm on $\mathcal{H}_{1,1}.$ Indeed, it follows readily
from the definitions that both $\mathcal{E}$ and $\mathcal{E}^{\mbox{Ric}\ensuremath{\omega_{0}}}$
are even Lip continuous wrt the $L^{\infty}-$ norm.

In the present setting where $X=\P^{1}$ we can, for concreteness,
take $\omega_{0}=\omega_{\Phi_{0}},$ whose restriction to $\C$ is
equal to a constant times $e^{-2\Phi_{0}}dz\wedge d\bar{z}.$ 

\subsection{Conclusion of proof of Theorem \ref{thm:counterex}}

The proof will follow from the following extension to $\mathcal{H}_{1,1}^{S^{1}}$
of a formula due to Donaldson when $\Phi\in\mathcal{H}^{S^{1}}$ \cite[Prop 3.2.8]{d0}. 
\begin{lem}
\label{lem:D formula in one dim sing}Assume that $\Phi$ is in $\mathcal{H}_{1,1}^{S^{1}}.$
Then 
\[
\mathcal{M}(\Phi)=\mathcal{L}(u)-\int_{[0,1]}\log(u''(y))dy,\,\,\,\mathcal{L}(u)=\frac{1}{2}(u(1)+u(0))-\int_{0}^{1}u(y)dy
\]
where $u''\in L_{loc}^{1}$ denotes the non-singular part of $\partial^{2}u.$
\end{lem}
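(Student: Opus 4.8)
The plan is to establish the formula by first proving it on the smooth dense subset $\mathcal{H}^{S^1}$ (where it is Donaldson's formula, cited as \cite[Prop 3.2.8]{d0}), and then extending to all of $\mathcal{H}_{1,1}^{S^1}$ by a continuity/approximation argument. The key structural observation is that the right-hand side splits into a \emph{linear} part $\mathcal{L}(u)$, which depends only on the boundary and integral of $u$, and a \emph{nonlinear} entropy-type part $-\int_{[0,1]}\log(u''(y))\,dy$ involving only the regular part of $\partial^2 u$. I expect the linear part to extend trivially by $L^\infty$-continuity, so the whole difficulty is concentrated in the nonlinear term.

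First I would recall the explicit correspondences from the Complex Preparations: via $x=\log|z|^2$ and the Legendre transform, $\Phi \in \mathcal{H}_b^{S^1}$ corresponds to $\phi \in \mathcal{P}_+(\mathbb{R})$ and thence to the convex function $u = \phi^*$ on $[0,1]$. I would compute each term of the Chen--Tian formula \ref{eq:formula for mab} in these coordinates. With the choice $\omega_0 = \omega_{\Phi_0}$, the curvature is $\omega_\Phi = \frac{i}{2\pi}\partial\bar\partial\Phi$ by \ref{eq:expr for curvature}, which in the $x$-variable reads (up to constants) $\phi''(x)\,dx$. The change of variables $y = \phi'(x)$ pushes $\omega_\Phi$ forward to Lebesgue measure $dy$ on $[0,1]$, since $\partial^2\phi$ is a probability measure; this is exactly the moment-map picture underlying Donaldson's formula. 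Under this change of variables, the entropy term $H_{\omega_0^n}(\omega_\Phi^n)$ becomes $\int_0^1 \log\big(\phi''(x(y))\big)\,dy$, and by the reciprocal relation \ref{eq:second deriv as inverse} we have $\phi''(x) = 1/u''(y)$, yielding the term $-\int_{[0,1]}\log(u''(y))\,dy$. The energy terms $\mathcal{E}$ and $\mathcal{E}^{\mbox{Ric}}$ combine into the affine functional $\mathcal{L}(u)$ in $u$; I would verify this by matching their differentials \ref{eq:differentials of energy and T-en} against those of $\mathcal{L}$.

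The main obstacle, and the crux of the lemma, is the extension of the entropy term to singular $\Phi$. When $\Phi \in \mathcal{H}_{1,1}^{S^1}$ but not smooth, $\partial^2\phi$ (equivalently $\partial^2 u$) may have a singular part, and the change of variables $y=\phi'(x)$ is no longer a diffeomorphism; the subgradient may be multivalued and intervals where $u$ is affine collapse to atoms. The key point is that the Chen-Tian entropy $H_{\omega_0^n}(\omega_\Phi^n)$ is defined using the \emph{absolutely continuous} part of the measure $\omega_\Phi^n$ against $\omega_0^n$, so only the regular part $\phi'' \in L^1_{loc}$ enters. Under Legendre duality this regular part is the reciprocal of the regular part $u''$ of $\partial^2 u$, away from the singular set. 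The plan is to make this precise by pushing forward carefully: the singular part of $\partial^2\phi$ (the collar where $\phi$ is affine, as in Lemma \ref{lem:phi t}) is supported where $u$ is strictly convex, and conversely, so these contribute only to the support structure and not to the absolutely continuous density. I would justify the identity $-\int\log(u''(y))\,dy$ by approximating $u$ by smooth strictly convex $u^{(j)}$ with $\|u^{(j)}-u\|_{L^\infty}\to 0$ (exactly as in Step 1 of the proof of Lemma \ref{lem:phi t}), applying the smooth formula to each $\phi^{(j)} = (u^{(j)})^*$, and then passing to the limit. Here the linear part converges by Lipschitz $L^\infty$-continuity of $\mathcal{E}$ and $\mathcal{E}^{\mbox{Ric}}$; for the nonlinear term the delicate step is controlling $\int\log(u^{(j)})''\,dy$, where lower semicontinuity of the entropy and monotone or dominated convergence (using convexity to get one-sided control on $(u^{(j)})''$) must be combined to identify the limit with $\int\log u''\,dy$ over the regular part. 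Establishing this convergence, and verifying that no mass from the singular part leaks into the limiting integrand, is where the real work lies.
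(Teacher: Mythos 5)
Your overall two-stage plan (smooth case via Donaldson's formula, then an approximation argument) matches the paper's, and you correctly locate the difficulty in the nonlinear term; but the mechanism you propose for the limiting step has a genuine gap. Passing the full identity $\mathcal{M}(\Phi^{(j)})=\mathcal{L}(u^{(j)})-\int\log\bigl((u^{(j)})''\bigr)dy$ to the limit requires \emph{two} convergence statements: convergence of the right-hand entropy term, which you discuss, and convergence of the left-hand side $\mathcal{M}(\Phi^{(j)})\rightarrow\mathcal{M}(\Phi)$, which you never address. The latter fails in general: the entropy part $H_{\omega_{0}}(\omega_{\Phi_{j}})$ of the Chen--Tian formula is not continuous under $L^{\infty}$-convergence of potentials, only lower semicontinuous under the induced weak convergence of curvature measures (convex functions whose second derivatives oscillate rapidly between $0$ and $2$ converge uniformly while their entropies do not converge). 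So even if you identify $\lim_{j}\bigl(\mathcal{L}(u^{(j)})-\int\log (u^{(j)})''dy\bigr)$, you cannot conclude that this limit equals $\mathcal{M}(\Phi)$. The tools you invoke for the right-hand side (lower semicontinuity plus monotone/dominated convergence) each give only one-sided inequalities, and no choice of smoothing fixes both sides simultaneously without a further idea. There is also a secondary confusion in your setup: for $\Phi\in\mathcal{H}_{1,1}^{S^{1}}$ the measure $\partial^{2}\phi=\phi''dx$ has \emph{no} singular part (that is precisely what $\omega_{\Phi}\in L^{\infty}$ means); the singular part sits in $\partial^{2}u$ (an affine collar of $\phi$ becomes an atom of $\partial^{2}u$), so your ``no mass leaks'' verification is set up on the wrong side of the Legendre transform.

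The paper's proof is designed precisely to avoid passing any entropy term to a limit. It rewrites the Chen--Tian formula as $\mathcal{M}(\Phi)=E_{0}(\Phi)+\int\phi''\log\phi''\,dx$, where $E_{0}$ consists of the energy terms together with the first-moment correction $2\int\phi_{0}\phi''\,dx$; this $E_{0}$ \emph{is} continuous along sequences satisfying $\|\Phi_{j}-\Phi\|_{L^{\infty}}\rightarrow0$ and the uniform bound $\omega_{\Phi_{j}}\leq C\omega_{\Phi_{0}}$ (weak convergence of $\phi_{j}''dx$ on compacts plus tail control from the bound). For smooth $\Phi_{j}$, Donaldson's formula combined with the change of variables gives the identity $E_{0}(\Phi_{j})=\mathcal{L}(u_{j})$ between these two \emph{continuous} functionals, and it is only this identity that is passed to the limit, yielding $\mathcal{M}(\Phi)=\mathcal{L}(u)+\int\phi''\log\phi''\,dx$ with the entropy of $\Phi$ itself never approximated. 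The conversion of that term into $-\int_{[0,1]}\log(u''(y))dy$ is then done exactly, for the singular $\Phi$ itself, by McCann's monotone change of variables theorem \cite[Theorem 4.4]{mc} applied to the absolutely continuous probability measure $\partial^{2}\phi$; that theorem is exactly the ingredient guaranteeing that the singular part of $\partial^{2}u$ does not contribute. To salvage your route you would need to produce an approximating sequence along which \emph{both} entropies converge and prove the convergence of $\mathcal{M}$ itself; the paper's decomposition shows both can be sidestepped entirely.
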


\begin{proof}
\emph{Step 1:} Assume that $\Phi\in\mathcal{H}_{1,1}^{S^{1}}.$ Then
\[
\mathcal{M}(\Phi)=\mathcal{L}(u)+\int_{\R}\phi^{''}(x)\log\phi^{''}(x)dx
\]

In the case when $\Phi\in\mathcal{H}^{S^{1}}$ (or more generally
when $u$ is continuous on $[0,1]$ and smooth and strictly convex
in the interior) this follows from Donaldson's formula \cite{d0}.
To extend the formula to the case when $\Phi\in\mathcal{H}_{1,1}^{S^{1}}$
first observe that 
\begin{equation}
\int\phi_{0}(x)\phi^{''}(x)dx<\infty,\label{eq:finite moments}
\end{equation}
 as follows directly from estimating $\phi''\leq C\phi_{0}''\leq Ae^{-|x|/B}$
and $\phi_{0}(x)\leq|x|+C.$ Hence, we can rewrite the Chen-Tian formula
\ref{eq:formula for mab} as 
\begin{equation}
\mathcal{M}(\Phi)=E_{0}(\Phi)+\int_{\R}\phi^{''}(x)\log\phi^{''}(x)dx,\label{eq:functional E not}
\end{equation}
 where 
\[
E_{0}(\Phi)=\left(\frac{\bar{R}}{n+1}\mathcal{E}(\Phi)-\mathcal{E}^{\mbox{Ric}\ensuremath{\omega_{0}}}(\Phi)\right)+2\int_{\R}\phi_{0}(x)\phi''(x)dx,
\]
Now take a sequence $\Phi_{j}\in\mathcal{H}^{S^{1}}$ such that $\left\Vert \Phi_{j}-\Phi\right\Vert _{L^{\infty}}\rightarrow0$
(which equivalently means that $\left\Vert u_{j}-u\right\Vert _{L^{\infty}[0,1]}\rightarrow0)$
and $\omega_{\Phi_{j}}\leq C\omega_{\Phi_{0}},$ i.e. 
\begin{equation}
\phi_{j}^{''}(x)\leq C\phi_{0}^{''}(x)\label{eq:bound on second deriv}
\end{equation}

We claim that
\begin{equation}
E_{0}(\Phi_{j})\rightarrow E_{0}(\Phi).\label{eq:conv of E not}
\end{equation}

Indeed, as pointed out above the first term appearing in the definition
of $E_{0}$ is continuous wrt the $L^{\infty}-$norm. To handle the
second term first observe that, since $\left\Vert \Phi_{j}-\Phi\right\Vert _{L^{\infty}(X)}\rightarrow0,$
the probability measures $\phi_{j}^{''}(x)dx$ converge weakly towards
$\phi^{''}(x)dx$ and hence, for any fixed $R>0,$ 
\[
\lim_{j\rightarrow\infty}\int_{|x|\leq R}\phi_{0}(x)\phi_{j}''(x)dx=\int_{|x|\leq R}\phi_{0}(x)\phi''(x)dx
\]
Moreover, the uniform bound \ref{eq:bound on second deriv} gives
\[
\limsup_{R\rightarrow\infty}\limsup_{j\rightarrow\infty}\int_{|x|\geq R}\phi_{0}\phi_{j}''(x)dx\leq C\limsup_{R\rightarrow\infty}\int_{|x|\geq R}\phi_{0}(x)\phi_{0}''(x)dx=0
\]
Hence, letting first $j\rightarrow\infty$ and then $R\rightarrow\infty$
proves \ref{eq:conv of E not}.

Now take a sequence $\Phi_{j}\in\mathcal{H}^{S^{1}}$ such that $\left\Vert \Phi_{j}-\Phi\right\Vert _{L^{\infty}}\rightarrow0$
which equivalently means that $\left\Vert u_{j}-u\right\Vert _{L^{\infty}[0,1]}\rightarrow0.$
By Donaldson's formula 
\[
E_{0}(\phi_{j})=\mathcal{L}(u_{j})
\]
and since both sides are continuous wrt the convergence of $\Phi_{j}$
towards $\Phi$ this concludes the proof of Step 1. 

\emph{Step 2: }Let $\phi$ be a convex function on $\R$ such that
$\partial^{2}\phi$ is a probability measure which is absolutely continuous
wrt $dx.$ Then 
\begin{equation}
\int_{\R}\phi^{''}(x)\log\phi^{''}(x)dx=-\int_{[0,1]}\log(u''(y))dy,\label{eq:entropy via leg}
\end{equation}
 if the left hand side is finite (and then $u''(y)>0$ a.e.).

This formula is a special case of McCann's Monotone change of variables
theorem \cite[Theorem 4.4]{mc}. But it may be illuminating to point
out that a simple direct proof can be given in the present setting
when $\phi$ is of the form $\phi_{t}$ appearing in Lemma \ref{lem:phi t}.
Indeed, then $\rho:=\phi''=0$ on a closed intervall $I$ of $\R$
and $\phi'$ diffeomorphism of the complement $I$ onto $]0,1[-\{1/2\}.$
Since $\rho\log\rho=0$ if $\rho=0$ the formula \ref{eq:entropy via leg}
then follows directly from making the change of variables $y=\phi'(x)$
on $\R-S.$ 
\end{proof}
Now, let $\Phi_{t}$ be the geodesic in $\mathcal{H}_{1,1}^{S^{1}}$
defined by the curve $\phi_{t}$ appearing in Lemma \ref{lem:phi t}.
Since $v$ is piece-wise affine we have $u_{t}''=u_{0}''$ a.e on
$\R$ and hence the previous lemma gives 
\[
\mathcal{M}(\Phi_{t})=-\int_{[0,1]}\log(u''(y))dy+t\mathcal{L}(v)
\]
 which is affine in $t.$ Moreover, $\phi_{t}$ is not induced from
the flow of a holomorphic vector field (since this would imply that
$v$ is affine on all of $[0,1]).$ This concludes the proof of Theorem
\ref{thm:counterex}. 
\begin{rem}
The functional $E_{0}$ in formula \ref{eq:functional E not} coincides
with the (attractive) Newtonian energy of the measure $\mu=\partial^{2}\phi:$
\[
E_{0}(\mu)=\frac{1}{4}\int_{\R^{2}}|x-y|\mu(x)\otimes\mu(y)
\]
and the continuiuty property of $E_{0}$ used in the in Step 1 can
be alternatively deduce from the fact that $E$ is continuous on the
space $\mathcal{P}_{1}(\R)$ of all probability measures with finite
first moment (endowed with the $L^{1}-$Wasserstein topology). This
point of view is further developed in the higher dimensional toric
setting in \cite{b0}.
\end{rem}

\subsection{Proof of Theorem \ref{thm:toric}}

In this higher dimensional setting we will be rather brief and refer
to \cite{b0} for more details. Let $(X,L)$ be an $n-$dimensional
toric manifold and denote by $P$ the corresponding moment lattice
polytope in $\R^{n}$ which contains $0$ in its interior. We will
denote by $d\sigma$ the measure on $\partial P$ induced from the
standard integer lattice in $\R^{n}$ (which is comparable with the
Lebesgue measure on $\partial P)$ \cite{d0}. The $n-$dimensional
real torus acting on $(X,L)$ will be denoted by $T.$ As above we
can then identify a $T-$invariant metric $\Phi$ on $L$ with positive
curvature current with a convex function $\phi(x)$ on $\R^{n}$ (whose
sub-gradient maps into $P)$ and, via the Legendre transform, with
a convex function $u$ on $P.$ We will denote by $\partial^{2}\phi$
the distributional Hessian of $\phi$ and by $(\nabla^{2}\phi)(x)$
the Alexandrov Hessian of $\phi$ which is defined for almost all
$x$ (on the subset where $\phi$ is finite). 

Assume that $(X,L)$ is uniformly K-stable relative to the torus $T$
(in the $L^{1}-$sense). Concretely, this means (see \cite{h}) that
there exists $\delta>0$ such that for any rational piece-wise affine
convex function $u$ on $P,$
\begin{equation}
\mathcal{L}(u):=\int_{P}udy-c\int_{\partial P}u\geq\delta\inf_{l\in(\R^{n})^{*}}\left(\int_{P}(u-l)dy-\inf_{P}(u-l)\right),\,\,\,c:=\int_{P}dy/\int_{\partial P}d\sigma,\label{eq:unif K toric}
\end{equation}
 where the inf ranges over all linear functions $l$ on $\R^{n}$
(which, geometrically, may be identified with an element of the real
part of the Lie algebra of the complex torus). We note that, by a
standard approximation argument, the inequality \ref{eq:unif K toric}
holds for the space $\mathcal{C}(P)$ of all convex functions $u$
on $P$ such that $u\in L^{1}(P)\cap L^{1}(\partial P)$ (where $u_{|\partial P}(y)$
is defined as the radial boundary limit of $u).$\footnote{In fact, if $u$ is convex on $P$ and in $L^{1}(\partial P),$ then
automatically $u\in L^{1}(P).$ } The uniform K-stability implies, by \cite{z-z0,h}, that $\mathcal{M}$
is coercive relative to $T,$ i.e. there exist $C>0$ such that the
following coercivity inequality holds on $\mathcal{H}^{T}:$ 
\[
\mathcal{M}(\Phi)\geq\inf_{g\in\C^{*n}}J(g\Phi)/C-C,
\]
 where $J$ denotes Aubin's $J-$functional. The functional $\mathcal{M}$
admits a canonical extension to the space $\mathcal{E}^{1}$ of all
(singular) metrics on $L$ with positive curvature current and finite
energy (namely, the greatest lsc extension of $\mathcal{M}$ from
$\mathcal{H}$ to $\mathcal{E}^{1},$ endowed with the strong topology
\cite{bbegz,bdl}). The coercivity of $\mathcal{M}$ combined with
the results in \cite{bbegz} (which show that $\mathcal{M}$ is lsc
wrt the weak topology on $\mathcal{E}^{1}$) implies that there exists
a $T-$invariant minimizer $\Phi_{0}$ of $\mathcal{M}$ on the space
$\mathcal{E}^{1}(X,L)$ of all (singular) metrics on $L$ with positive
curvature current and finite energy.

A generalization of the argument used in the proof of Lemma \ref{lem:D formula in one dim sing}
gives the following lemma which extends Donaldson's formula in \cite{d0}
to the finite energy setting (the proof is given in \cite{b0}):
\begin{lem}
\label{lem:sing dona in toric}Assume that $\Phi\in\mathcal{E}^{1}(X,L)^{T}$
and $\mathcal{M}(\Phi)<\infty.$ Then $u\in\mathcal{C}(P)$ and
\begin{equation}
\mathcal{M}(\Phi)=\mathcal{F}(u):=\mathcal{L}(u)-\int_{P}\log(\nabla^{2}u(y))dy,\label{eq:D formula in sing setting}
\end{equation}
 where $\nabla^{2}u$ denotes the Alexandrov Hessian of $u$ and both
terms are finite (in particular, $\nabla^{2}u(y)>0$ a.e. on $P).$
\end{lem}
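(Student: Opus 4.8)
The plan is to follow the same two-step scheme as in the one-dimensional Lemma \ref{lem:D formula in one dim sing}, now in $n$ variables, and then to upgrade the identity from the smooth class $\mathcal{H}^{T}$ to the finite energy class $\mathcal{E}^{1}(X,L)^{T}$ by approximation. First I would rewrite the Chen--Tian formula \ref{eq:formula for mab} in toric coordinates. Under the correspondence $\Phi\leftrightarrow\phi$ the top Monge--Amp\`ere measure $\omega_{\Phi}^{n}$ becomes a constant multiple of the real Monge--Amp\`ere measure $\det(\nabla^{2}\phi)\,dx$ built from the Alexandrov Hessian, so the entropy term $H_{\omega_{0}^{n}}(\omega_{\Phi}^{n})$ splits as $\int_{\R^{n}}\log(\det\nabla^{2}\phi)\det(\nabla^{2}\phi)\,dx$ plus a term that is linear in $\omega_{\Phi}^{n}$ and records the reference density $\omega_{0}^{n}/dx$. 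Absorbing the latter, together with the reference part of the bracket in \ref{eq:formula for mab}, into a functional $E_{0}(\Phi)$, I obtain exactly as in Step 1 of Lemma \ref{lem:D formula in one dim sing} the decomposition
\[
\mathcal{M}(\Phi)=E_{0}(\Phi)+\int_{\R^{n}}\log(\det\nabla^{2}\phi)\det(\nabla^{2}\phi)\,dx.
\]
As in the one-dimensional case $E_{0}$ is Lipschitz continuous with respect to the energy on $\mathcal{E}^{1}$, and Donaldson's smooth toric formula \cite{d0} identifies $E_{0}(\Phi)$ with the linear functional $\mathcal{L}(u)$ of \ref{eq:unif K toric} on $\mathcal{H}^{T}$; hence $E_{0}=\mathcal{L}(u)$ on all of $\mathcal{E}^{1}$ by continuity.

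The second step is the $n$-dimensional analogue of Step 2, namely McCann's monotone change of variables \cite[Theorem 4.4]{mc}. Since $\nabla\phi$ pushes the probability measure $\det(\nabla^{2}\phi)\,dx$ forward to normalized Lebesgue measure on $P$, and since $\nabla u=(\nabla\phi)^{-1}$ makes the Hessians inverse matrices, one has $\det(\nabla^{2}\phi(x))\det(\nabla^{2}u(y))=1$ at $y=\nabla\phi(x)$ (the matrix version of \ref{eq:second deriv as inverse}). The change of variables $y=\nabla\phi(x)$ therefore turns the entropy integral into
\[
\int_{\R^{n}}\log(\det\nabla^{2}\phi)\det(\nabla^{2}\phi)\,dx=-\int_{P}\log(\det\nabla^{2}u(y))\,dy,
\]
which is precisely the nonlinear part of $\mathcal{F}(u)$. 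Combining the two steps yields $\mathcal{M}(\Phi)=\mathcal{F}(u)$ on $\mathcal{H}^{T}$.

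It remains to treat the finite energy subtleties that distinguish $\mathcal{E}^{1}$ from the bounded class used in the one-dimensional argument. The finiteness of $\mathcal{M}(\Phi)$ forces the entropy $H_{\omega_{0}^{n}}(\omega_{\Phi}^{n})$ to be finite, so $\omega_{\Phi}^{n}$ is absolutely continuous and the Alexandrov Hessian $\nabla^{2}\phi$ is defined and positive almost everywhere; this is exactly what is needed for McCann's theorem to apply and for the right-hand side to be meaningful. Finite energy also yields the boundary integrability $u\in L^{1}(\partial P)$, and with convexity $u\in L^{1}(P)$, so that $u\in\mathcal{C}(P)$. To pass off $\mathcal{H}^{T}$ I would approximate $\Phi$ by smooth $T$-invariant metrics $\Phi_{j}$ (equivalently smooth strictly convex $u_{j}$) with $\Phi_{j}\to\Phi$ in the strong topology and $u_{j}\to u$ in $L^{1}(P)\cap L^{1}(\partial P)$, so that the linear part $\mathcal{L}(u_{j})\to\mathcal{L}(u)$ by $L^{1}$-continuity.

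The hard part will be the convergence of the entropy term. The functional $u\mapsto-\int_{P}\log(\det\nabla^{2}u)\,dy$ is lower semicontinuous but not continuous, so a naive passage to the limit gives only an inequality. Since this functional is lsc and, by Donaldson's formula, $\mathcal{F}$ agrees with $\mathcal{M}$ on $\mathcal{H}^{T}$, the characterization of $\mathcal{M}$ on $\mathcal{E}^{1}$ as the greatest lsc extension of its restriction to $\mathcal{H}$ \cite{bbegz,bdl} immediately gives $\mathcal{F}\le\mathcal{M}$. For the reverse inequality I would construct a recovery sequence, namely a decreasing regularization $\Phi_{j}\in\mathcal{H}^{T}$ with $\Phi_{j}\to\Phi$ along which $E_{0}(\Phi_{j})\to E_{0}(\Phi)$ and, crucially, $-\int_{P}\log(\det\nabla^{2}u_{j})\,dy\to-\int_{P}\log(\det\nabla^{2}u)\,dy$; then $\mathcal{M}(\Phi)\le\liminf_{j}\mathcal{M}(\Phi_{j})=\lim_{j}\mathcal{F}(u_{j})=\mathcal{F}(u)$. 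Producing such a sequence, that is, controlling the entropy from above in the absence of any $L^{\infty}$ bound on the Hessian, is the central difficulty, and this is the step carried out in detail in \cite{b0}.
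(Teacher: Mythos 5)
Your outline is, in essence, the proof the paper intends: note that the paper does not actually prove this lemma in the text, but says only that it follows from ``a generalization of the argument used in the proof of Lemma \ref{lem:D formula in one dim sing}'' and defers the details to \cite{b0} (a paper in preparation). Your two steps --- the Chen--Tian formula \ref{eq:formula for mab} rewritten in toric coordinates, with the linear part identified via Donaldson's smooth formula \cite{d0}, and McCann's monotone change of variables \cite[Theorem 4.4]{mc} converting the entropy into $-\int_{P}\log\det\nabla^{2}u\,dy$ --- are precisely that generalization, and, like the paper, you defer the genuinely hard analytic point to \cite{b0}, so your proposal matches the paper's own treatment in both strategy and level of completeness.

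Two comments. First, the step you single out as the central difficulty (a recovery sequence along which the entropy converges, with no $L^{\infty}$ bound on the Hessian available) is largely an artifact of treating $\mathcal{M}$ on $\mathcal{E}^{1}$ purely as an abstract greatest lsc extension. By \cite{bdl} (cited in the paper for exactly this purpose), the extended K-energy on $\mathcal{E}^{1}$ is itself computed by the Chen--Tian expression, i.e. the energy terms plus the entropy $H_{\omega_{0}^{n}}(\omega_{\Phi}^{n})$; granting this, $\mathcal{M}(\Phi)<\infty$ forces the entropy to be finite, hence $\omega_{\Phi}^{n}$ is absolutely continuous, and McCann's theorem applies directly to the finite-energy potential $\phi$ --- no $\Gamma$-convergence argument or recovery sequence is needed. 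Second, the place where your sketch is genuinely too quick is the claim that $\mathcal{L}(u_{j})\rightarrow\mathcal{L}(u)$ by ``$L^{1}$-continuity'': the boundary integral $\int_{\partial P}u\,d\sigma$ is \emph{not} continuous under $L^{1}(P)$ (or even locally uniform) convergence of convex functions, since boundary values can jump in the limit. So the identification of the energy part $E_{0}(\Phi)$ with $\mathcal{L}(u)$ at the finite-energy level requires either a direct toric identification of the $\mathcal{E}^{\mbox{Ric}\ensuremath{\omega_{0}}}$-type term with the boundary integral, or an approximation chosen so that boundary values converge (e.g. a monotone one); this, together with the point above, is the kind of detail for which the paper cites \cite{b0}.
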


\begin{rem}
\label{rem:zz}The functional $\mathcal{F}$ on $\mathcal{C}(P)$
has previously been studied in a series of papers by Zhou-Zhu (see
\cite{z-z,z-z0}). In particular it was shown in \cite{z-z} that
$\mathcal{F}$ admits a minimizer $u.$ But the point of the previous
formula is that it identifies $\mathcal{F}$ with the Mabuchi functional
on the space $\mathcal{E}^{1}(X,L)^{T}.$ As a byproduct this gives
a new proof of the existence of a minimizer $u$ of $\mathcal{F}.$ 
\end{rem}

Let now $\Phi_{0}$ and $\Phi_{1}$ be two given minimizers of $\mathcal{M}$
in $\mathcal{E}^{1}(X,L)^{T}$ and denote by $\Phi_{t}$ the corresponding
geodesic in $\mathcal{E}^{1}(X,L)^{T}$ (which corresponds to $u_{t}:=u_{0}+t(u_{1}-u_{0})$
under the Legendre transform). By the previous lemma the function
$t\mapsto\mathcal{M}(\Phi_{t})$ decomposes in two terms, where the
first term is affine in $t$ and the second one is convex. Since $\mathcal{M}(\Phi_{t})$
is constant (and in particular affine) it follows that the second
term,
\[
t\mapsto-\int_{P}\log(\det\nabla^{2}u_{t}(y))dy
\]
 is also affine. But this forces, using the arithmetic-geometric means
inequality, that
\begin{equation}
\nabla^{2}u_{1}=\nabla^{2}u_{0}\,\,\text{a.e.\,on \ensuremath{P}}.\label{eq:Hessian same ae}
\end{equation}
As a consequence the previous function in $t$ is, in fact, constant.
Since $\mathcal{M}(\Phi_{t})$ is also constant in $t$ formula \ref{eq:D formula in sing setting}
forces $\mathcal{L}(u_{t})=\mathcal{L}(u_{0})$ for all $t.$ Setting
$v:=u_{1}-u_{0}$ this means that 
\[
\mathcal{L}(v)=0.
\]
Now, if $v$ is convex, then it follows form the assumption of uniform
relative K-stability that $v$ is affine and hence $\Phi_{0}$ and
$\Phi_{1}$ coincide modulo the action of $\C^{*n}.$ All that remains
is thus to show that $v$ is convex. To this end we invoke the assumption
that the distributional Hessian of $\phi_{0}$ satisfies 
\[
\nabla^{2}\phi_{0}\geq C_{K}I
\]
on any given compact subset $K$ of $\R^{n}.$ We claim that this
implies that $u_{0}\in C_{loc}^{1,1}(P).$ Indeed, since $\Phi_{0}$
has finite energy it has full Monge-Ampère mass and hence the closure
of the sub-gradient image $(\partial\Phi_{0})(\R^{n})$ is equal to
$P.$ It follows (just as in the proof of Lemma \ref{lem:phi t})
that 
\[
\partial^{2}u_{0}=\nabla^{2}u_{0}\leq C_{K}^{-1}I
\]
 on the closure of $(\partial\Phi_{0})(K)$ in $P.$ Since $K$ was
an arbitrary compact subset of $\R^{n}$ it follows that $u_{0}\in C_{loc}^{1,1}(P).$
The proof of the theorem is now concluded by invoking the following
lemma (see \cite[Lemma 3.2]{mc}):
\begin{lem}
Let $u_{0}$ and $u_{1}$ be two finite convex functions on an open
convex set $P\subset\R^{n}$ such that $u_{0}\in C_{loc}^{1,1}(P)$
and the Alexandrov Hessians satisfy \ref{eq:Hessian same ae}. Then
$u_{1}-u_{0}$ is convex.
\end{lem}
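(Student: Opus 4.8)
The plan is to pass to distributional Hessians and to exploit the fact that the $C_{loc}^{1,1}$ regularity of $u_0$ forces its distributional Hessian to be purely absolutely continuous. Recall that for any finite convex function $u$ on the open convex set $P$ the distributional Hessian $\partial^2 u$ is a positive semi-definite matrix-valued Radon measure, whose Lebesgue decomposition
\[
\partial^2 u = \nabla^2 u\, dx + \partial_s^2 u
\]
has absolutely continuous part with density equal a.e.\ to the Alexandrov Hessian $\nabla^2 u$, and singular part $\partial_s^2 u$ which is again positive semi-definite. This is precisely the $n$-dimensional analogue of the decomposition used in the one-dimensional convex preparations above, and follows from Alexandrov's second differentiability theorem together with the Lebesgue decomposition of the positive measure $\partial^2 u$.

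First I would note that the hypothesis $u_0\in C_{loc}^{1,1}(P)$ means that $\nabla u_0$ is locally Lipschitz, hence lies in $W_{loc}^{1,\infty}$, so that its distributional derivative coincides a.e.\ with its pointwise derivative and belongs to $L_{loc}^\infty$. Consequently the distributional Hessian of $u_0$ carries no singular part, i.e.\ $\partial^2 u_0 = \nabla^2 u_0\, dx$. By contrast, for the convex function $u_1$ one only has the general decomposition $\partial^2 u_1 = \nabla^2 u_1\, dx + \partial_s^2 u_1$ with $\partial_s^2 u_1 \geq 0$.

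Next I would compute, by linearity, the distributional Hessian of $v:=u_1-u_0$:
\[
\partial^2 v = \partial^2 u_1 - \partial^2 u_0 = (\nabla^2 u_1 - \nabla^2 u_0)\, dx + \partial_s^2 u_1.
\]
By the hypothesis \ref{eq:Hessian same ae} the two absolutely continuous parts cancel, so that $\partial^2 v = \partial_s^2 u_1 \geq 0$. Thus $v$ has positive semi-definite distributional Hessian; since $v$ is continuous (being the difference of two finite convex, hence locally Lipschitz, functions) this forces $v$ to be convex on $P$, as desired.

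The step I expect to be the main obstacle is the precise justification that the $C_{loc}^{1,1}$ regularity of $u_0$ eliminates the singular part $\partial_s^2 u_0$, combined with the identification of the absolutely continuous part of the distributional Hessian of a convex function with its Alexandrov Hessian. Both facts are classical --- the former follows from Rademacher's theorem and the coincidence of pointwise and distributional derivatives for $W_{loc}^{1,\infty}$ maps, the latter from Alexandrov's theorem --- but it is precisely their combination that makes the cancellation of the regular parts meaningful at the level of matrix-valued measures, which is the heart of the argument.
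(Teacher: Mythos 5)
Your proof is correct. The paper does not prove this lemma itself but quotes it from \cite[Lemma 3.2]{mc}, and your argument --- the Lebesgue decomposition of the distributional Hessian of a convex function into an absolutely continuous part whose density is a.e.\ the Alexandrov Hessian plus a positive semi-definite singular part, with the $C_{loc}^{1,1}$ hypothesis eliminating the singular part of $\partial^{2}u_{0}$, so that $\partial^{2}(u_{1}-u_{0})=\partial_{s}^{2}u_{1}\geq0$ --- is essentially the same argument as in the cited source.
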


\subsection{\label{subsec:A-generalized-slope}A generalized slope formula for
the K-energy}

We conclude the paper by observing that a by-product of Lemma \ref{lem:sing dona in toric}
is the following generalization of the slope formula for the K-energy
in \cite{bhj2} (which concerns the case when $\Phi_{t}$ is defined
by a bona fide metric on a test configuration) to the present singular
setting:
\begin{prop}
\label{prop:(Slope-formula)-Let}(Slope formula) Let $\Phi_{t}$ be
a geodesic ray in $\mathcal{E}^{1}(X,L)^{T}$ such that $\Phi_{0}\in\mathcal{H}(X,L)^{T}$
and $\mathcal{M}(\Phi_{t})<\infty$ for any $t\in[0,\infty[.$ Then
\[
\lim_{t\rightarrow\infty}t^{-1}\mathcal{M}(\Phi_{t})=\mathcal{L}(v)<\infty
\]
 where $u_{t}=u+tv$ is the curve of convex functions in $L^{1}(\partial P)$
corresponding to $\Phi_{t}$ under Legendre transformation.
\end{prop}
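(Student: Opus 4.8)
The plan is to feed each member $\Phi_t$ of the ray into the singular Donaldson formula of Lemma \ref{lem:sing dona in toric} and then separate the linear growth from a lower-order remainder. Writing $u_t = u + tv$ for the corresponding affine curve of convex functions on $P$, formula \ref{eq:D formula in sing setting} gives, for every $t\in[0,\infty[$ (all terms finite, by the standing hypothesis $\mathcal{M}(\Phi_t)<\infty$),
\[
\mathcal{M}(\Phi_t) = \mathcal{L}(u_t) - \int_P \log\bigl(\det\nabla^2 u_t(y)\bigr)\,dy .
\]
Since the functional $\mathcal{L}$ of \ref{eq:unif K toric} is linear in $u$, the first term is exactly affine, $\mathcal{L}(u_t)=\mathcal{L}(u)+t\mathcal{L}(v)$, so that $t^{-1}\mathcal{L}(u_t)\to\mathcal{L}(v)$. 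It therefore remains only to show that the entropy term $g(t):=-\int_P\log(\det\nabla^2 u_t)\,dy$ is sublinear, i.e. $t^{-1}g(t)\to 0$; this is the crux of the matter and the main obstacle.

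To control $g$ I would first record two structural facts. Because each $u_t$ is convex on $P$, the pointwise limit $v=\lim_{t\to\infty}u_t/t$ is convex, hence $\nabla^2 v\geq 0$ a.e.; and $\nabla^2 u\geq 0$ a.e. since $u$ is convex. By Alexandrov's theorem, at a.e. $y\in P$ the functions $u,v,u_t$ are simultaneously twice differentiable and $\nabla^2 u_t=\nabla^2 u+t\nabla^2 v$. Consequently, for $t\geq 1$ one has the two-sided matrix comparison
\[
\nabla^2 u \;\leq\; \nabla^2 u_t \;=\; \nabla^2 u+t\nabla^2 v \;\leq\; t\,(\nabla^2 u+\nabla^2 v) \;=\; t\,\nabla^2 u_1
\]
a.e. on $P$, where the right-hand inequality uses $(t-1)\nabla^2 u\geq 0$.

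Applying monotonicity of the determinant on positive semidefinite matrices, then $-\log$, and integrating over $P$, this comparison yields
\[
g(1) - n\,|P|\,\log t \;\leq\; g(t) \;\leq\; g(0), \qquad t\geq 1 ,
\]
where $g(0)=-\int_P\log\det\nabla^2 u$ and $g(1)=-\int_P\log\det\nabla^2 u_1$ are finite precisely because $\mathcal{M}(\Phi_0)$ and $\mathcal{M}(\Phi_1)$ are finite (again via Lemma \ref{lem:sing dona in toric}). Hence $g(t)=O(\log t)$, so $t^{-1}g(t)\to 0$ and
\[
\lim_{t\to\infty} t^{-1}\mathcal{M}(\Phi_t) = \lim_{t\to\infty}\bigl(t^{-1}\mathcal{L}(u) + \mathcal{L}(v) + t^{-1}g(t)\bigr) = \mathcal{L}(v),
\]
with $\mathcal{L}(v)$ finite since $v\in\mathcal{C}(P)$ (convex and in $L^1(\partial P)$). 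I expect the only subtle points beyond this to be the a.e. additivity of the Alexandrov Hessians along the affine curve $u_t=u+tv$ and the finiteness of $g(0),g(1)$, both of which are routine given Alexandrov's theorem and Lemma \ref{lem:sing dona in toric}.
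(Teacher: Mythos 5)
Your proof is correct and follows essentially the same route as the paper: both feed the ray into Lemma \ref{lem:sing dona in toric}, split off the affine term $\mathcal{L}(u)+t\mathcal{L}(v)$, and show the entropy term is $O(\log t)$ via the inequality $\nabla^{2}u_{0}+t\nabla^{2}v\leq t\left(\nabla^{2}u_{0}+\nabla^{2}v\right)$ for $t\geq1,$ together with the finiteness guaranteed by the lemma. The only (cosmetic) difference is that the paper factors out $\det\nabla^{2}u_{0}$ using the inverse $A(y)$ of $\nabla^{2}u_{0}$ (available since $\Phi_{0}\in\mathcal{H}(X,L)^{T}$), whereas you compare determinants directly by monotonicity on positive semidefinite matrices, sandwiching $\nabla^{2}u_{t}$ between $\nabla^{2}u_{0}$ and $t\nabla^{2}u_{1}$ --- the resulting two-sided bounds are identical.
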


\begin{proof}
Since $\mathcal{M}(\Phi_{t})<\infty$ Lemma \ref{lem:sing dona in toric}
shows that $u_{t}=u_{0}+tv\in L^{1}(\partial P)$ for all $t\geq0,$
where $v:=u_{1}-u_{0}.$ Moreover, since $u_{t}$ is convex for any
$t\geq0$ it also follows that $v$ is convex and $v\in L^{1}(\partial P).$
Now, since $\partial^{2}u_{0}$ is invertible we can, denoting the
inverse by $A(y),$ write 
\[
\int\log(\det\nabla^{2}(u_{0}+tv)(y))dy=C_{0}+\int_{P}\log(\det(1+tA(y)\nabla^{2}v_{0}(y))dy,
\]
 which is finite for any $t$ (by Lemma \ref{lem:sing dona in toric}).
Moreover, since $\nabla^{2}v_{0}(y)\geq0$ we have, when $t\geq1,$
that 
\[
0\leq\int_{P}\log(\det(I+tA(y)\nabla^{2}v_{0}(y))dy\leq\text{Vol\ensuremath{(P)}}n\log t+\int_{P}\log(\det(I+A(y)\nabla^{2}v_{0}(y))dy,
\]
where all terms are finite. Hence, dividing by $t$ and letting $t\rightarrow\infty$
concludes the proof of the proposition. 
\end{proof}
In the terminology of \cite{bjh1,bhj2,bbj} this formula shows that
the slope of the Mabuchi functional along a finite energy geodesic
is equal to the Non-Archimedean Mabuchi functional of the corresponding
(singular) Non-Archimedean metric. It would be very interesting to
extend this slope formula to the non-toric setting. Indeed, this is
the key missing ingredient when trying to extend the variational approach
to the (uniform) Yau-Tian-Donaldson conjecture in the ``Fano case''
in \cite{bbj} to a general polarized manifold $(X,L),$ in order
to produce a finite energy minimizer of $\mathcal{M}.$

\end{document}